\documentclass[11pt,a4paper]{article}
\usepackage{amsmath, amssymb, amsthm}
\usepackage{enumitem}
\usepackage{geometry}
\usepackage{diagbox}
\usepackage{boondox-cal}
\usepackage{graphicx}

\newtheorem{theorem}{Theorem}[section]
\newtheorem{proposition}[theorem]{Proposition}

\theoremstyle{definition}
\newtheorem{definition}[theorem]{Definition}

\newtheorem{remark}[theorem]{Remark}

\newcommand\blfootnote[1]{%
  \begingroup
  \renewcommand\thefootnote{}\footnote{#1}%
  \addtocounter{footnote}{-1}%
  \endgroup
}

\newcommand{\ecal}{\mathord{\ensuremath{\scalebox{1.2}{$\mathcal{e}$}}}}
\newcommand{\fcal}{\mathord{\ensuremath{\scalebox{1.2}{$\mathcal{f}$}}}}
\newcommand{\zcal}{\mathord{\ensuremath{\scalebox{1.2}{$\mathcal{z}$}}}}
\newcommand{\ucal}{\mathord{\ensuremath{\scalebox{1.2}{$\mathcal{u}$}}}}

\title{Associative Structures in Pseudo-Riemannian Lie Algebras}
{\author{
  \begin{tabular}{@{}c@{}}
Santiago \textsc{Casta\~{n}eda-Montoya}\\
Edison Alberto \textsc{Fern\'andez-Culma} \textsuperscript{$\dagger$}\blfootnote{${}^{\dagger}$ Partially supported by CONICET and SECyT-UNC: 33620180100523CB} \\
    {\small Centro de Investigaci\'on y Estudios en Matem\'atica de C\'ordoba, CONICET}\\
    {\small Facultad de Matem\'atica, Astronom\'ia, F\'isica y Computaci\'on, UNC}\\
    {\small C\'ordoba, Argentina}\\
    \texttt{\small santiago.castaneda@mi.unc.edu.ar}\\
    \texttt{\small efernandez@famaf.unc.edu.ar}\\
  \end{tabular}
}}
\date{May 24, 2026}

\begin{document}

\maketitle

\blfootnote{2020 \textit{Mathematics Subject Classification:} {53C50; 22E15; 53C22; 17B60}. \textit{Key words and phrases:} Pseudo-Riemannian Lie algebras; Associative $U$-tensor; Geodesic completeness; Almost-abelian Lie algebras; Two-step nilpotent Lie algebras}


\begin{abstract}
This paper investigates the algebraic and geometric consequences of the associativity of the symmetric part $U$ of the Levi-Civita connection on a pseudo-Riemannian Lie algebra $(\mathfrak{g}, \langle \cdot, \cdot \rangle)$. We demonstrate that in the Riemannian (positive-definite) setting, the associativity of $U$ is an extremely restrictive condition that forces the tensor to vanish identically, thereby recovering the class of bi-invariant metrics. In contrast, in the pseudo-Riemannian setting, we focus on the subclass where $(\mathfrak{g}, U)$ is associative and unimodular. As a primary result, we establish that every connected Lie group endowed with a left-invariant pseudo-Riemannian metric whose $U$-tensor is associative and unimodular is geodesically complete. Finally, we explore the families of 2-step nilpotent and almost-abelian Lie algebras. For the latter, we obtain some rigid structural classifications, showing that the paradigmatic models are the $3$-dimensional Heisenberg algebra with certain (anti)-Lorentzian metrics or a semi-direct extension involving a nondegenerate infinitesimal $\beta$-transformation on the canonical neutral space $W \oplus W^*$.

\end{abstract}



\section{Introduction}
The study of left-invariant metrics on Lie groups constitutes a fundamental chapter in modern differential geometry, situated at the intersection of algebraic structures and curvature properties. Since the foundational work of John Milnor \cite{Milnor} on the curvatures of left-invariant metrics, significant effort has been devoted to understanding how the properties of the Lie algebra $(\mathfrak{g}, [\cdot, \cdot])$ determine the global geometry of the associated Lie group $G$.

For a Lie group $G$ endowed with a pseudo-Riemannian metric $g$, let $(\mathfrak{g}, \langle \cdot, \cdot \rangle)$ be the corresponding pseudo-Riemannian Lie algebra. A central tool in the analysis of its geometry is the Levi-Civita connection $\nabla$, which can be decomposed into a skew-symmetric part (the Lie bracket) and a symmetric bilinear tensor $U: \mathfrak{g} \times \mathfrak{g} \to \mathfrak{g}$ defined by the Koszul identity: $2\langle U(X, Y), Z \rangle = \langle [Z, X], Y \rangle + \langle [Z, Y], X \rangle$. The $U$-tensor encodes critical information regarding the geometry of $(G,g)$; in particular, its vanishing ($U=0$) characterizes the class of bi-invariant metrics (when $G$ is connected).

In recent years, several authors have explored the geometric implications of imposing specific algebraic identities on the Levi-Civita product. For instance Novikov or Leibniz structures impose strong constraints on the holonomy and local symmetry of the space \cite{BenayadiBoucetta}. A prominent example is the study of Left Symmetric Algebras (LSA), which provide the necessary algebraic framework for studying Lie groups endowed with a left-invariant, torsion-free, and flat affine connection. This line of research has recently reached a definitive milestone with the complete structural description of flat Lorentzian Lie groups provided by Mohamed Boucetta in \cite{Boucetta}.

In this paper, we investigate the associativity of $U$ as a natural algebraic extension of the bi-invariant setting. We demonstrate a rigidity theorem for the Riemannian (positive-definite) case: the associativity of $U$ necessarily forces $U=0$, thereby recovering the class of bi-invariant metrics. In contrast, the pseudo-Riemannian setting admits a much richer variety of non-trivial structures due to the existence of isotropic directions. We prove that the associativity of $U$, combined with the cyclic symmetry of the tensor $\theta(X,Y,Z)=\langle U(X,Y),Z\rangle$ and a natural unimodularity hypothesis on $U$, makes $(\mathfrak{g}, U)$ a nilpotent algebra.

We argue that these conditions represent a natural algebraic proximity to the bi-invariant case; whereas bi-invariance requires $U$ to be zero (the trivial abelian case), our hypotheses allow $U$ to be nilpotent (the closest structural regime to the abelian setting). This property has profound consequences for the theory of geodesics. While geodesic completeness is a complex and generally open problem in pseudo-Riemannian geometry, the associativity and unimodularity of $U$ allow the reduced geodesic equation $\dot{X} + U(X, X) = 0$ to be integrated as a truncated power series. Following the techniques in \cite{Guediri}, we show that this ensures the geodesic flow is polynomial, thereby guaranteeing the global completeness of the metric.

The paper concludes with a structural analysis of almost-abelian and 2-step nilpotent Lie algebras. For the almost-abelian case, we obtain certain rigid classification results, demonstrating that there is a family which is essentially built from a set of building blocks: specifically, the 3-dimensional Heisenberg algebra with an isotropic center and certain infinitesimal $\beta$-field extensions on canonical neutral spaces. Our results show how the geometry of isotropic subspaces completely determines the viability of these associative structures within this family. In the family of $2$-step nilpotent Lie algebras, we identify key algebraic constraints that prevent associativity when the metric restriction to the center is non-degenerate.

\section{Preliminaries}

Let $G$ be a Lie group and let $\mathfrak{g}$ denote its Lie algebra, which is the finite-dimensional real vector space of left-invariant smooth vector fields on $G$.  If $g$ is a \textit{left invariant
pseudo-Riemannian metric} on $G$, i.e. the left translations are isometries of $(G,g)$, then $g$ is completely determined by the the scalar product $\langle \cdot , \cdot \rangle$ on $\mathfrak{g}$ induced by $g$:
\begin{eqnarray*}
  \langle X , Y \rangle &= & g(X,Y), \qquad \mbox{for all } X,Y \in \mathfrak{g},
\end{eqnarray*}
We refer to the pair $(\mathfrak{g}, \langle \cdot , \cdot \rangle)$ as a \textit{pseudo-Riemannian Lie algebra}. Conversely, any scalar product on $\mathfrak{g}$ uniquely defines a left-invariant pseudo-Riemannian metric on $G$. For the sake of convenience, we shall assume throughout this article that $G$ is connected.

Two pseudo-Riemannian Lie algebras $(\mathfrak{g},\langle \cdot , \cdot \rangle_{\mathfrak{g}})$ and $(\mathfrak{h},\langle \cdot , \cdot \rangle_{\mathfrak{h}})$ are said to be \textit{isometrically isomorphic} if there exists a Lie algebra isomorphism $\varphi: \mathfrak{g} \rightarrow \mathfrak{h}$ such that $\varphi^{\ast}\langle \cdot , \cdot \rangle_{\mathfrak{h}} = \langle \cdot , \cdot \rangle_{\mathfrak{g}}$. In this case, $\varphi$ is called an \textit{isometric isomorphism}.

It is a standard fact that the Levi-Civita connection $\nabla$ of a left-invariant metric $(G,g)$ is a \textit{left-invariant affine connection}. That is, for any $X, Y \in \mathfrak{g}$, the vector field $\nabla_{X} Y$ is also in $\mathfrak{g}$. Consequently, the restriction of $\nabla$ to $\mathfrak{g}$ is a well-defined bilinear map $\nabla: \mathfrak{g} \times \mathfrak{g} \rightarrow \mathfrak{g}$. This algebraic operator completely determines the Levi-Civita connection on $G$, since $\mathfrak{g}$ generates the $C^{\infty}(G)$-module of smooth vector fields $\mathfrak{X}(G)$.

By the \textit{Koszul formula}, we have:
\begin{equation} \label{koszul}
\langle \nabla_{X}Y , Z \rangle = \frac{1}{2}\big( \langle[X,Y],Z\rangle - \langle [Y,Z],X \rangle + \langle[Z,X],Y\rangle \big),
\end{equation}
for all $X,Y,Z \in \mathfrak{g}$. From Equation \eqref{koszul}, we can decompose $\nabla$ into its skew-symmetric and symmetric parts:
$$
\nabla_{X}Y = \frac{1}{2}[X,Y] + U(X,Y),
$$
where the symmetric bilinear map $U:\mathfrak{g} \times \mathfrak{g} \rightarrow \mathfrak{g}$ is implicitly defined by:
\begin{equation}\label{definicionU}
\langle U(X,Y),Z  \rangle = \frac{1}{2} \langle [Z,X] , Y \rangle + \frac{1}{2} \langle [Z,Y] , X \rangle.
\end{equation}
A distinguished class of pseudo-Riemannian metrics on Lie groups is that of \textit{bi-invariant metrics}, where both left and right translations act as isometries. For a connected Lie group $G$, it is well known that a pseudo-Riemannian metric is bi-invariant if and only if $\operatorname{ad}_X$ is skew-symmetric with respect to $\langle \cdot, \cdot \rangle$ for all $X \in \mathfrak{g}$. Equivalently, the Levi-Civita connection on $\mathfrak{g}$ has a vanishing symmetric part (i.e., $U = 0$).

We are interested in studying the conditions under which the bilinear map $U$ defines an associative product on $\mathfrak{g}$. This property can be regarded as a natural generalization of the bi-invariant case; indeed, while bi-invariant metrics are characterized by $U = 0$ (the abelian case), an associative $U$ leads, under certain hypotheses, to a nilpotent structure. A detailed analysis of this condition and its relationship with the geometry of the Lie group will be provided in the following sections.

It is convenient to introduce the covariant $3$-tensor $\theta$ on $\mathfrak{g}$, defined by $\theta(X,Y,Z)= \langle U(X,Y),Z \rangle$. This tensor measures the obstruction of $\langle \cdot , \cdot \rangle$ from defining a bi-invariant pseudo-Riemannian metric. Since $U$ is symmetric, $\theta$ is symmetric with respect to its first two variables. Another well-known property of $\theta$ is its cyclic identity:
\begin{equation} \label{ciclico}
\theta(X,Y,Z) + \theta(Y,Z,X) + \theta(Z,X,Y) = 0,
\end{equation}
for all $X,Y,Z \in \mathfrak{g}$.

Finally, note that $\theta$ is a \textit{metric invariant}. That is, if $\varphi:\mathfrak{g} \rightarrow \mathfrak{h}$ is an isometric isomorphism between two pseudo-Riemannian Lie algebras $(\mathfrak{g},\langle \cdot , \cdot \rangle_{\mathfrak{g}})$ and $(\mathfrak{h},\langle \cdot , \cdot \rangle_{\mathfrak{h}})$, then their respective $\theta$ tensors are equivalent:
$$
\varphi^{\ast} \theta_{(\mathfrak{h},\langle \cdot , \cdot \rangle_{\mathfrak{h}})} = \theta_{(\mathfrak{g},\langle \cdot , \cdot \rangle_{\mathfrak{g}})}.
$$

To adopt a more algebraic perspective, let us denote by $\divideontimes$ the product on $\mathfrak{g}$ induced by $U$, defined as $X \divideontimes Y := U(X,Y)$ for all $X, Y \in \mathfrak{g}$. A straightforward observation is that if $(\mathfrak{g}, \divideontimes)$ is an associative algebra, then it cannot possess a unit element due to the cyclic identity \eqref{ciclico}. Indeed, if $\ucal$ were a unit for $(\mathfrak{g}, \divideontimes)$, then for any $X \in \mathfrak{g}$ we would have:
\begin{equation*}
\langle \ucal \divideontimes \ucal , X \rangle + \langle \ucal \divideontimes X , \ucal \rangle + \langle X \divideontimes \ucal , \ucal \rangle = 0.
\end{equation*}
Since $\divideontimes$ is commutative and $\ucal$ is the unit element, the above expression simplifies to $3\langle \ucal, X \rangle = 0$ for all $X \in \mathfrak{g}$. Since the scalar product $\langle \cdot, \cdot \rangle$ is nondegenerate, this implies $\ucal = 0$, which contradicts the definition of a unit in a non-trivial algebra.

\section{Definite positive case.}

The study of left-invariant Riemannian metrics on Lie groups is, by far, the most extensively developed case in the literature. One of the foundational works in this direction, which motivated several lines of research on the geometry of Lie groups, is the seminal paper by John Milnor \cite{Milnor}. The first structural results concerning metric Lie algebras that satisfy specific curvature conditions (such as being flat, satisfying pinching conditions, or being Einstein) were obtained within this framework.

Assuming the left-invariant metric to be positive-definite, we prove that the $U$-tensor is associative only if the metric is bi-invariant.

\begin{theorem}
Let $(\mathfrak{g}, \langle \cdot, \cdot \rangle)$ be a metric Lie algebra endowed with a positive-definite inner product. If the product $\divideontimes$, defined by the symmetric part of the Levi-Civita connection of $(\mathfrak{g}, \langle \cdot, \cdot \rangle)$, is associative, then this product vanishes identically, which implies that the metric is bi-invariant.
\end{theorem}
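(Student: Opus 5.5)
The plan has two steps. First, use positivity to show that the associative, commutative algebra $(\mathfrak{g},\divideontimes)$ is nilpotent. Second, use the cyclic identity \eqref{ciclico} together with definiteness to show that nilpotency forces $\divideontimes=0$. Bi-invariance then follows from the characterization $U=0$ recalled in the Preliminaries (for connected $G$).

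\emph{Step 1: there are no nonzero idempotents, hence $(\mathfrak{g},\divideontimes)$ is nilpotent.} Suppose $e\neq 0$ satisfies $e\divideontimes e=e$. Applying \eqref{ciclico} with $X=Y=Z=e$ gives $3\langle e\divideontimes e,e\rangle=0$, that is, $3\langle e,e\rangle=0$. Since the inner product is positive definite, $e=0$, a contradiction. (This is the same computation as the no-unit argument already in the paper.) Next, suppose some $X$ is not nilpotent for $\divideontimes$. The associative subalgebra $A_X$ generated by $X$ is finite-dimensional and commutative. I would invoke the standard fact that a finite-dimensional associative algebra which is not nil contains a nonzero idempotent. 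Concretely, one can apply Fitting's decomposition to the multiplication operator $L_X$ on $A_X$ and take the idempotent as a polynomial in $X$ without constant term. This contradicts the previous paragraph. Hence every element is nilpotent, and by Wedderburn's theorem a finite-dimensional associative nil algebra is nilpotent. Writing $A=\mathfrak{g}$, we get $A^m=0$ for some $m$, where $A^k$ denotes the span of all products of $k$ elements.

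\emph{Step 2: nilpotency plus definiteness forces $A^2=0$.} Let $m$ be minimal with $A^m=0$, and suppose $m\ge 3$, so that $A^{m-1}\neq0$. Take $Y\in A$ and $V\in A^{m-2}$, and put $W=Y\divideontimes V\in A^{m-1}$. Apply \eqref{ciclico} to $(Y,V,W)$:
\[
\langle Y\divideontimes V,W\rangle+\langle V\divideontimes W,Y\rangle+\langle W\divideontimes Y,V\rangle=0 .
\]
The products $V\divideontimes W$ and $W\divideontimes Y$ lie in $A^m=0$, so this reduces to $\langle W,W\rangle=0$. By definiteness $W=0$. Since such products $Y\divideontimes V$ span $A^{m-1}$, we get $A^{m-1}=0$, contradicting the minimality of $m$. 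Hence $m\le 2$, i.e.\ $U\equiv 0$.

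The main obstacle I expect is Step 1, namely producing a nonzero idempotent from a non-nilpotent element. This is classical, but it needs a careful citation or a short argument in the nonunital setting. I would use the Fitting decomposition of $L_X$ restricted to $A_X$, and take the projection onto the Fitting-one component, which can be realized inside $A_X$ as a polynomial in $X$ without constant term. The remaining steps are direct applications of \eqref{ciclico}.
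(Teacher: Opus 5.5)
Your argument is correct, but it is organized differently from the paper's. The paper never proves that $(\mathfrak{g},\divideontimes)$ is nilpotent. It works only with powers of a single element $X$, for which \eqref{ciclico} reads $\langle X^{l+m},X^{n}\rangle+\langle X^{m+n},X^{l}\rangle+\langle X^{n+l},X^{m}\rangle=0$. Combining the three degree-$6$ instances $(l,m,n)=(2,2,2),(1,1,4),(1,2,3)$ gives $\langle X^3,X^3\rangle=0$, hence $X^3=0$. The degree-$4$ instance $(1,1,2)$ then gives $\langle X^2,X^2\rangle=0$, so $X^2=0$, and polarization finishes. That route is completely elementary and uses no structure theory.

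Yours first proves global nilpotency from the absence of nonzero idempotents plus two classical theorems: a non-nil finite-dimensional algebra contains a nonzero idempotent, and Wedderburn's nil $\Rightarrow$ nilpotent. In the commutative case, the triangularization argument of Proposition \ref{claseW0} already suffices for the second. Your Step 2 then performs the same kind of degree-lowering as the paper, but on the ideals $A^k$ rather than on one element. What this buys is a signature-free statement. If $A^m=0\neq A^{m-1}$ with $m\ge 3$, putting an arbitrary $W'\in A^{m-1}$ in the third slot of \eqref{ciclico} gives $\langle A^{m-1},A^{m-1}\rangle=0$, so the last nonzero power is totally isotropic. Likewise, Step 1 shows every idempotent is a null vector in any signature. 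Both facts complement Proposition \ref{claseW0} in the indefinite case.

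On the step you flagged: let $A_X=N\oplus R$ be the Fitting decomposition of $L_X$, and suppose $L_e=\pi_R$ on $A_X$. This only yields $e\divideontimes e=L_e(e)=\pi_R(e)$, so you must also arrange $e\in R$. A cleaner way avoids Fitting altogether. The kernel of $s\mapsto s(X)$ on $t\mathbb{R}[t]$ is an ideal $(\mu)$ of $\mathbb{R}[t]$ with $\mu=t^k\nu$, where $k\ge 1$, $\nu(0)\neq 0$, and $\deg\nu\ge 1$ because $X$ is not nilpotent. Take $e=p(X)$ with $p\equiv 0 \bmod t^k$ and $p\equiv 1 \bmod \nu$. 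This is a nonzero idempotent, since $\mu\mid p^2-p$ but $\mu\nmid p$.
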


\begin{proof}
Since $(\mathfrak{g}, \divideontimes)$ is a commutative and associative algebra, the powers of any element $X \in \mathfrak{g}$ with respect to the product $\divideontimes$ are well-defined. Therefore, we can apply the cyclic condition (\ref{ciclico}), which yields that for any $l,m,n \in \mathbb{N}$ and any $X \in \mathfrak{g}$,
\begin{eqnarray} \label{ciclico2}
\langle X^{l+m}, X^{n} \rangle + \langle X^{m+n}, X^{l} \rangle + \langle X^{n+l}, X^{m} \rangle = 0.
\end{eqnarray}
This relation can be viewed as an algebraic expression of degree $m+n+l$. By fixing the degree of the expression, we obtain a system of homogeneous equations involving the inner product and the powers induced by $\divideontimes$.

Let us first consider expressions of degree 6. Setting $l=m=n=2$, equation (\ref{ciclico2}) gives:
$$
3\langle X^4, X^2 \rangle = 0.
$$
Next, setting $l=m=1$ and $n=4$, we obtain:
$$
\langle X^2, X^4 \rangle + 2 \langle X^5, X^1 \rangle = 0.
$$
Given our previous deduction, this implies that $\langle X^5, X^1 \rangle = 0$. Now, substituting $l=1$, $m=2$, and $n=3$ into relation (\ref{ciclico2}) yields:
$$
\langle X^3, X^3 \rangle + \langle X^5, X^1 \rangle + \langle X^4, X^2 \rangle = 0.
$$
Since the last two terms vanish, we get $\langle X^3, X^3 \rangle = 0$. Because the metric $\langle \cdot, \cdot \rangle$ is positive-definite, this forces $X^3 = 0$ for all $X \in \mathfrak{g}$.

We now analyze expressions of degree 4. Setting $l=m=1$ and $n=2$, relation (\ref{ciclico2}) becomes:
$$
\langle X^2, X^2 \rangle + 2\langle X^3, X^1 \rangle = 0.
$$
Since $X^3 = 0$, the second term vanishes, resulting in $\langle X^2, X^2 \rangle = 0$. Again, by the positive-definiteness of the metric, this implies $X^2 = 0$ for all $X \in \mathfrak{g}$.

Finally, because $(\mathfrak{g}, \divideontimes)$ is a commutative algebra, the \textit{linearization/polarization} of the identity $X^2 = 0$ implies that the product $\divideontimes$ vanishes identically for all elements ($\operatorname{char}(\mathbb{R}) \neq 2$). The vanishing of the symmetric part of the Levi-Civita connection precisely means the metric is bi-invariant, which concludes the proof.
\end{proof}

\section{Geometric consequences of $U$-tensor associativity}

In this section, we focus on a specific class of metrics characterized by the property that the $U$-tensor induces an associative and unimodular algebra structure on $\mathfrak{g}$. In this context, unimodularity is understood in the sense that the left-multiplication operators $L_X: Y \mapsto X \divideontimes Y$ satisfy $\operatorname{tr}(L_X) = 0$ for all $X \in \mathfrak{g}$. This algebraic restriction is naturally motivated by the following analysis of a tensor space where the tensor $\theta$ resides.

\subsection{Weyl Decomposition of the space of tensors with the same symmetries of $\theta$.}

We briefly review the structural decomposition of the space $\mathcal{W} \subset \otimes^3 V^*$ consisting of covariant $3$-tensors that satisfy the symmetry and cyclic conditions previously identified under the action of pseudo-orthogonal groups. While the underlying representation theory is a classical consequence of Weyl's foundational work and belongs to the established mathematical folklore of invariant theory, we detail it here to ensure the self-containment of our exposition and to fix the algebraic notation required for the subsequent sections.

Let $V$ be an $n$-dimensional real vector space equipped with a scalar product $\langle\cdot,\cdot \rangle$ of signature $(p,q)$. We denote its associated orthogonal group as $O(V, \langle\cdot,\cdot \rangle)$. We consider the space $\mathcal{W} \subset \otimes^3 V^*$ defined by the following algebraic constraints:
\begin{enumerate}
    \item \textbf{Partial Symmetry:} $\theta(x,y,z) = \theta(y,x,z)$, i.e., $\theta \in S^2 V^* \otimes V^*$.
    \item \textbf{Cyclic Identity:} $\theta(x,y,z) + \theta(y,z,x) + \theta(z,x,y) = 0$.
\end{enumerate}

To decompose this space into irreducible components under the action of $O(V,\langle\cdot,\cdot \rangle)$, we rely on the fundamental results of Hermann Weyl regarding the representation theory of the classical groups \cite{Weyl}
(see also \cite[Chapters 9, 10]{GoodmanWallach}).

\begin{proposition}[{\cite {Weyl}}]
Let $\mathcal{W}$ be the space of covariant $3$-tensor  defined above, with $\operatorname{dim}(V)\geq 3$. The structural properties of this space under the action of the nested groups $O(V,\langle\cdot,\cdot \rangle) \subset GL(V)$ are characterized as follows:
\begin{enumerate}
    \item \textbf{General Linear Action:} Under the natural action of $GL(V)$, the space $\mathcal{W}$ is an irreducible $GL(V)$-module corresponding to the partition $\lambda = (2,1)$. Its dimension is given by:
    \begin{equation}
        \dim(\mathcal{W}) = \frac{n(n^2 - 1)}{3}.
    \end{equation}
    \item \textbf{Orthogonal Decomposition:} Under the restricted action of the orthogonal group $O(V,\langle\cdot,\cdot \rangle)$, $\mathcal{W}$ decomposes into two irreducible $O(V,\langle\cdot,\cdot \rangle)$-modules:
    \begin{equation}
        \mathcal{W} = \mathcal{W}_0 \oplus \mathcal{V}
    \end{equation}
    where $\mathcal{W}_0$ is the \textbf{traceless component}; the kernel of the metric contraction $\text{Tr}_{1,2}^{\langle\cdot,\cdot \rangle}$, and $\mathcal{V} \cong V^* \cong V$ is the \textbf{vector component}.
\end{enumerate}
\end{proposition}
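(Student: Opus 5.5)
Part 1 comes first. The plan is to identify $\mathcal{W}$ with the image of a Young symmetrizer and verify its dimension by a direct count. Consider the linear map
$$\Phi: S^2V^*\otimes V^* \to S^3V^*, \qquad \Phi(\theta)(x,y,z)=\theta(x,y,z)+\theta(y,z,x)+\theta(z,x,y).$$
Then $\mathcal{W}=\ker\Phi$. For $\sigma\in S^3V^*$, viewed inside $S^2V^*\otimes V^*$, we get $\Phi(\sigma)=3\sigma$, so $\Phi$ is surjective. This gives
$$\dim\mathcal{W}=n\binom{n+1}{2}-\binom{n+2}{3}=\frac{n(n^2-1)}{3}.$$
The map $\Phi$ is $GL(V)$-equivariant, which yields the decomposition $S^2V^*\otimes V^*\cong S^3V^*\oplus \mathcal{W}$. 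On the other hand, Pieri's rule gives $S^2\otimes V=S_{(3)}\oplus S_{(2,1)}$, so $\mathcal{W}\cong S_{(2,1)}V^*$. This module is irreducible by Schur--Weyl duality (Weyl's theorem, as cited). As a check, the Weyl dimension formula for $(2,1)$ gives $n(n-1)(n+1)/3$, which agrees with the count above.

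For Part 2 the plan is to construct the splitting explicitly and then appeal to Weyl's branching theory for irreducibility. Define the contraction $c=\operatorname{Tr}_{1,2}:\mathcal{W}\to V^*$ by $c(\theta)(z)=\sum_i \varepsilon_i\theta(e_i,e_i,z)$, where $(e_i)$ is an orthonormal basis with $\varepsilon_i=\langle e_i,e_i\rangle$. This map is $O(V)$-equivariant.

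Next I would build an equivariant section. For $\alpha\in V^*$, set
$$s(\alpha)(x,y,z)=2\langle x,y\rangle\alpha(z)-\langle y,z\rangle\alpha(x)-\langle z,x\rangle\alpha(y).$$
This tensor is symmetric in $x,y$ and its cyclic sum vanishes, so $s(\alpha)\in\mathcal{W}$. Contracting gives
$$c(s(\alpha))=2n\alpha-\alpha-\alpha=(2n-2)\alpha,$$
which is nonzero because $n\ge 2$. Hence $c$ is surjective, and we obtain the decomposition
$$\mathcal{W}=\ker c\oplus s(V^*)=\mathcal{W}_0\oplus\mathcal{V},$$
with $\mathcal{V}\cong V^*\cong V$ via the metric. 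The summand $\mathcal{V}$ is irreducible because $V$ itself is an irreducible $O(p,q)$-module.

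The main obstacle is proving that $\mathcal{W}_0$ is irreducible under $O(p,q)$ for arbitrary signature. My first step would be complexification: $O(p,q)$ is Zariski dense in $O(n,\mathbb{C})$, so irreducibility of a real module follows from irreducibility of its complexification under $O(n,\mathbb{C})$. This reduces the signature-independent question to the compact case $O(n)$. There, Weyl's theorem (Goodman--Wallach, Ch.~10) states that the space of traceless tensors in $S_{(2,1)}V$ is the irreducible $O(n)$-module with highest weight $(2,1)$ whenever $n\ge3$. This is precisely where the hypothesis $\dim V\ge 3$ enters: the two-row diagram $(2,1)$ must satisfy $\lambda_1'+\lambda_2'\le n$.

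As a check, the branching rule gives $S_{(2,1)}|_{O(n)}=[2,1]\oplus[1]$, which matches the decomposition above. Its dimensions $\frac{n(n^2-1)}{3}-n$ and $n$ confirm that no further summands appear.
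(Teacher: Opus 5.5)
Your proposal is correct and follows essentially the paper's route. Part 1 uses Schur--Weyl duality for $GL(V)$-irreducibility; your cyclic-sum map $\Phi$ and the Pieri argument only make the identification $\mathcal{W}\cong S_{(2,1)}V^*$ and the dimension count more concrete than the paper's appeal to the Young symmetrizer and Weyl's dimension formula. Part 2 uses the same equivariant section $s=\iota$ with $\operatorname{Tr}_{1,2}\circ s=2(n-1)\,\mathrm{Id}$, and the same complexification/unitary-trick reduction to Weyl's theorem on traceless tensors for the permissible diagram $(2,1)$, $\lambda_1'+\lambda_2'=3\le n$. The paper additionally notes that the cyclic identity gives $\operatorname{Tr}_{1,3}=\operatorname{Tr}_{2,3}=-\tfrac12\operatorname{Tr}_{1,2}$ on $\mathcal{W}$, so $\ker\operatorname{Tr}_{1,2}$ is exactly Weyl's fully traceless subspace; you leave this implicit, but your dimension/branching check ($\dim\ker\operatorname{Tr}_{1,2}=\frac{n(n^2-1)}{3}-n=\dim[2,1]$) already forces it.
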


\begin{proof}$ $
\begin{enumerate}
\item Irreducibility under $GL(V)$: \\
By the well-known \textit{Schur-Weyl duality}, the algebraic constraints (1) and (2) identify $\mathcal{W}$ as the image of the Young symmetrizer associated with the partition $\lambda = (2,1)$. As established in Weyl's treatise on the representations of the general linear group \cite[Chapter IV]{Weyl}, this duality ensures a one-to-one correspondence between the irreducible representations of the symmetric group $\mathfrak{S}_k$ and the finite-dimensional irreducible representations of $GL(V)$ that appear in the decomposition of $(V^\ast)^{\otimes k}$. Consequently, from \cite[Th. 4.4.D]{Weyl}, it follows that $\mathcal{W}$ is an irreducible $GL(V)$-module. Its dimension is computed via the Weyl's dimension formula:
\[ \dim(\mathcal{W}) = \frac{n(n^2-1)}{3}. \]

\item[2.1] Decomposition under $O(V,\langle\cdot,\cdot \rangle)$: \\
When the group action is restricted to $O(V,\langle\cdot,\cdot \rangle)$, we follow \cite[Chapter V, Section 6]{Weyl}. Consider the trace map $\text{Tr}^{\langle\cdot,\cdot\rangle}_{1,2} : \mathcal{W} \to V^*$ defined by
\[ \text{Tr}^{\langle\cdot,\cdot\rangle}_{1,2}(\theta)(z) = \sum_{i,j=1}^{n} g^{ij} \theta(e_i, e_j, z), \]
where $\{e_i\}$ is a basis of $V$ and $(g^{ij})$ is the inverse matrix of the metric coefficients $g_{ij} = \langle e_i, e_j \rangle$. This map is well-defined and $O(V,\langle\cdot,\cdot \rangle)$-equivariant. Since $V^*$ is irreducible for $O(V,\langle\cdot,\cdot \rangle)$, we have a split exact sequence of $O(V,\langle\cdot,\cdot \rangle)$-modules:
\[ 0 \longrightarrow \mathcal{W}_0 \xrightarrow{\ \iota_0 \ } \mathcal{W} \xrightarrow{\ \text{Tr}^{\langle\cdot,\cdot\rangle}_{1,2} \ } V^* \longrightarrow 0 \]
where $\iota_0$ denotes the natural inclusion of the traceless kernel $\mathcal{W}_0 = \ker( \text{Tr}^{\langle\cdot,\cdot\rangle}_{1,2} )$ into $\mathcal{W}$. To prove that the sequence splits, we introduce the equivariant embedding $\iota: V^* \to \mathcal{W}$ such that $( \text{Tr}^{\langle\cdot,\cdot\rangle}_{1,2} \circ \iota) = \text{Id}_{V^*}$ (up to a non-zero constant), thereby doing $\mathcal{V} = \text{im}(\iota)$ as a direct summand. For $\alpha \in V^*$, we define:
\[ \iota(\alpha)(x,y,z) = 2\langle x,y \rangle \alpha(z) - \langle x,z\rangle \alpha(y) - \langle y,z\rangle \alpha(x). \]
A direct calculation verifies that $\iota(\alpha)$ satisfies both the partial symmetry and the cyclic identity. Furthermore, evaluating its trace yields:
\[ \text{Tr}^{\langle\cdot,\cdot\rangle}_{1,2}(\iota(\alpha))(z) = 2n\alpha(z) - \alpha(z) - \alpha(z)= 2(n-1)\alpha(z). \]
Since $n > 1$, this trace is non-zero, proving that $\mathcal{W} \cong \mathcal{W}_0 \oplus \mathcal{V}$.

\item[2.2] Irreducibility of $\mathcal{W}_0$: \\
Although Weyl's original formulation emphasizes the compact orthogonal group $O(n)$, by the Weyl Unitary Trick \cite[Ch. VIII, \S11]{Weyl}, the finite-dimensional representation theory of the pseudo-orthogonal
group $O(p,q)$ is completely reducible, and its irreducible modules are in natural bijection with those of $O(n)$ via their common complexification $O(n,\mathbb{C})$. The irreducibility of $\mathcal{W}_0$ then follows from
\cite[Ch. V, \S7, Theorem (5.7.H)]{Weyl}: since the index symmetries of $\mathcal{W}$ imply that $\mathrm{Tr}^{\langle\cdot,\cdot\rangle}_{1,3}$ and $\mathrm{Tr}^{\langle\cdot,\cdot\rangle}_{2,3}$ are proportional to
$\mathrm{Tr}^{\langle\cdot,\cdot\rangle}_{1,2}$, the kernel $\mathcal{W}_0 = \ker(\mathrm{Tr}^{\langle\cdot,\cdot\rangle}_{1,2})$ coincides with the vanishing of \emph{all} metric contractions on
$\mathcal{W}$, and is therefore precisely the traceless subspace $P_0(\lambda)$ in the sense of Weyl. The diagram $\lambda = (2,1)$ is permissible since the total length of its first two columns satisfies $2 + 1 = 3 \leq \operatorname{dim}(V)$, so Theorem (5.7.H) applies and $\mathcal{W}_0$ is an irreducible $O(V,\langle\cdot,\cdot\rangle)$-module.
\end{enumerate}
\end{proof}

\subsection{Geometry of $\mathcal{W}_0$}

\begin{definition}
Let $(\mathfrak{g}, \langle \cdot, \cdot \rangle)$ be a pseudo-Riemannian Lie algebra. We say that $(\mathfrak{g}, \langle \cdot, \cdot \rangle)$ belongs to the class $\mathcal{W}_0$ if its associated tensor $\theta_{(\mathfrak{g}, \langle \cdot, \cdot \rangle)}$ lies in $\mathcal{W}_0$.
\end{definition}

It is worth noting that the associativity condition on the $U$-tensor defines an \textit{algebraic set} within the vector space $\mathcal{W}$. Rather than tackling this problem in full generality, we restrict our focus to the subclass $\mathcal{W}_0$. Due to the symmetries of the $\theta$ tensor, the following identity holds: $2 \operatorname{Tr}^{\langle\cdot,\cdot\rangle}_{2,3} \theta = -\operatorname{Tr}^{\langle\cdot,\cdot\rangle}_{1,2} \theta$. Consequently, if $(\mathfrak{g}, \langle \cdot, \cdot \rangle)$ belongs to the class $\mathcal{W}_0$, the induced algebra $(\mathfrak{g}, \divideontimes)$ is \textit{unimodular}; that is, the left-multiplication operators $L_X: \mathfrak{g} \rightarrow \mathfrak{g}$ defined by $Y \mapsto X \divideontimes Y$ satisfy $\operatorname{tr}(L_X) = 0$ for all $X \in \mathfrak{g}$.

\begin{remark}
From Equation (\ref{definicionU}), since $\operatorname{Tr}(L_x) = -\operatorname{ad}_X$, it follows that $(\mathfrak{g},[\cdot,\cdot])$ is a unimodular Lie algebra if and only if $(\mathfrak{g},\divideontimes)$ is also unimodular.
\end{remark}

\begin{proposition}\label{claseW0}
Let $(\mathfrak{g}, \langle \cdot, \cdot \rangle)$ be a pseudo-Riemannian Lie algebra in the class $\mathcal{W}_0$. If the $U$-tensor is associative, then the product defined by $U$ endows $\mathfrak{g}$ with the structure of a nilpotent commutative associative algebra.
\end{proposition}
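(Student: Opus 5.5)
Our aim is to show that every left multiplication $L_X$ is nilpotent, and then that the commutative associative algebra $(\mathfrak{g},\divideontimes)$ is nilpotent. The key input is unimodularity: as observed after the definition of the class $\mathcal{W}_0$, membership in $\mathcal{W}_0$ gives $\operatorname{tr}(L_X)=0$ for all $X\in\mathfrak{g}$. Since $\divideontimes$ is commutative and associative, we have $L_{X\divideontimes Y}=L_X L_Y$, and in particular $L_{X^k}=L_X^k$ for all $k\geq 1$. Applying unimodularity to the element $X^k$ then yields
\[
\operatorname{tr}(L_X^k)=\operatorname{tr}(L_{X^k})=0 \qquad \text{for all } k\geq 1 .
\]

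We would next argue over $\mathbb{C}$ (or simply over $\mathbb{R}$ using Newton's identities). Vanishing of the power sums $\operatorname{tr}(L_X^k)$ for $k=1,\dots,n$ forces every eigenvalue of $L_X$ to be zero, since the characteristic zero hypothesis lets Newton's identities recover the elementary symmetric functions from the power sums. Hence each $L_X$ is nilpotent. Note that this step does not even need the cyclic identity, only the trace condition coming from $\mathcal{W}_0$.

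The remaining step, which we expect to be the main (though standard) obstacle, is to pass from each $L_X$ being nilpotent to the algebra itself being nilpotent. The family $\mathcal{L}=\{L_X : X\in\mathfrak{g}\}$ is a linear space of pairwise commuting operators, because $L_XL_Y=L_{X\divideontimes Y}=L_{Y\divideontimes X}=L_YL_X$. A commuting family of nilpotent operators is simultaneously strictly upper triangularizable. One way to see this is to note that the associative algebra generated by $\mathcal{L}$ is commutative and spanned by products of nilpotent commuting operators, so it consists of nilpotent operators; alternatively, one can invoke Engel's theorem for the abelian Lie algebra $\mathcal{L}$. Consequently, any product $L_{X_1}\cdots L_{X_n}$ of $n=\dim\mathfrak{g}$ such operators vanishes.

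To conclude, we would use associativity once more: every product $X_1\divideontimes\cdots\divideontimes X_{n+1}$ equals $L_{X_1}\cdots L_{X_n}(X_{n+1})$, which is therefore $0$. Thus $\mathfrak{g}^{n+1}=0$, and $(\mathfrak{g},\divideontimes)$ is a nilpotent commutative associative algebra.
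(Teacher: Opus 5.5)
Your proposal is correct and follows essentially the same route as the paper: you use associativity to get $L_X^k=L_{X^k}$, then unimodularity (from membership in $\mathcal{W}_0$) to get $\operatorname{tr}(L_X^k)=0$, hence each $L_X$ is nilpotent, and then simultaneous strict triangularization of the commuting family $\{L_X\}$ to conclude. Your closing observation that $X_1\divideontimes\cdots\divideontimes X_{n+1}=L_{X_1}\cdots L_{X_n}(X_{n+1})=0$ makes explicit the last implication, which the paper leaves implicit.
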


\begin{proof}
The proof follows directly from the associativity of the product $\divideontimes$. For any $X \in \mathfrak{g}$ and $n \in \mathbb{N}$, associativity implies that the $n$-th power of the left-multiplication operator satisfies $(L_X)^n = L_{X^n}$. By the unimodularity hypothesis, $\operatorname{tr}(L_{X^n}) = 0$ for all $n \ge 1$. Over a field of characteristic zero, the vanishing of the traces of all powers of an endomorphism implies that the endomorphism is nilpotent; thus, $L_X$ is a nilpotent transformation for every $X$. Furthermore, since the algebra $(\mathfrak{g}, \divideontimes)$ is commutative, it follows that the family of nilpotent operators $\{L_X : X \in \mathfrak{g}\}$ is a commuting family. By standard results in linear algebra (specifically, that commuting nilpotent operators are simultaneously triangularizable) it follows that there exists a basis in which all $L_X$ are strictly upper triangular. This implies that the algebra $(\mathfrak{g}, \divideontimes)$ is nilpotent, as required.
\end{proof}

\subsubsection{Geodesics}

The study of geodesics in Lie groups endowed with left-invariant metrics is a classical topic in differential geometry. While it is well known that every left-invariant positive-definite metric is geodesically complete, the indefinite case is vastly more intricate and completely uncertain in general. Much of our understanding in the pseudo-Riemannian setting stems from the foundational work of Mohammed Guediri \cite{Guediri}. For the sake of completeness, we briefly recall the geodesic equation for left-invariant metrics.

Let $\gamma: I \subseteq \mathbb{R} \rightarrow G$ be a smooth curve. At each point $\gamma(t)$, its velocity vector $\dot{\gamma}(t)$ belongs to the tangent space $T_{\gamma(t)}G$. Using a basis of left-invariant vector fields $\{E_1, \dots, E_n\}$ of $\mathfrak{g}$, we can express this velocity as:
$$
\dot{\gamma}(t) = \sum_{k=1}^n x_{k}(t) E_k(\gamma(t)).
$$
The curve $\gamma$ is a geodesic if the covariant derivative of its velocity field along the curve vanishes:
$$
\frac{D}{dt} \dot{\gamma}(t) = 0.
$$
Using the standard properties of the covariant derivative along a curve, this yields the well-known geodesic equation in terms of the basis components:
\begin{eqnarray}\label{geodesica}
0 = \sum_{k=1}^n x^{'}_{k}(t) E_k(\gamma(t)) + \sum_{k=1}^n \sum_{j=1}^n  x_{k}(t) x_{j}(t) \nabla_{E_j} E_k (\gamma(t)).
\end{eqnarray}
This expression naturally motivates the definition of a time-dependent vector in the Lie algebra $\mathfrak{g}$, given by $\displaystyle X(t) = \sum_{k=1}^n x_{k}(t) E_k$, and the corresponding differential equation defined purely on $\mathfrak{g}$:
$$
\dot{X} + \nabla_{X}X = 0.
$$
By evaluating along the curve $\gamma(t)$, we retrieve Equation (\ref{geodesica}). Recalling that $\nabla_X X = \frac{1}{2}[X, X] + U(X, X)$, we arrive at the fundamental reduced geodesic equation:
\begin{eqnarray}\label{geodesica2}
\dot{X} + U(X, X) = 0.
\end{eqnarray}
The problem is thus decoupled into two stages. One first solves the quadratic ordinary differential equation (\ref{geodesica2}) in the vector space $\mathfrak{g}$. Once the solution $X(t)$ is found, the geodesic is reconstructed by integrating the equation $\dot{\gamma}(t) = X(t)_{\gamma(t)}$ on the group $G$. Consequently, a left-invariant metric is geodesically complete if and only if all solutions to $\dot{X} = - X \divideontimes X$ are defined for all $t \in \mathbb{R}$.

If $(\mathfrak{g}, \langle \cdot, \cdot \rangle)$ is a pseudo-Riemannian Lie algebra in the class $\mathcal{W}_0$ and its $U$-tensor is associative, a remarkable consequence of Proposition \ref{claseW0} is that the metric is always complete. To see this, we can propose a formal power series solution for Equation (\ref{geodesica2}),
$$
X(t) = \sum_{k=0}^{\infty} A_k t^k, \quad \mbox{with } A_k \in \mathfrak{g}.
$$
Given that the algebra $(\mathfrak{g}, \divideontimes)$ is commutative and assumed to be associative, substituting this series into the differential equation requires the coefficients to satisfy $A_k = (-1)^k X_0^{k+1}$, yielding:
$$
X(t) = \sum_{k=0}^{\infty} (-t)^k X_0^{k+1},
$$
where $X_0 = X(0)$. By Proposition \ref{claseW0}, the algebra $(\mathfrak{g}, \divideontimes)$ is nilpotent. Therefore, there exists an integer $N \in \mathbb{N}$ such that $X_0^{N+1} = 0$. The formal power series naturally truncates into a finite sum:
$$
X(t) = \sum_{k=0}^{N} (-t)^k X_0^{k+1}.
$$
Since this is a polynomial in $t$, the solution $X(t)$ is globally defined for all $t \in \mathbb{R}$. This guarantees the completeness of the metric, proving the following theorem:

\begin{theorem}
Let $(\mathfrak{g}, \langle \cdot, \cdot \rangle)$ be a pseudo-Riemannian Lie algebra in the class $\mathcal{W}_0$. If the $U$-tensor is associative, then the corresponding left-invariant pseudo-Riemannian metric on $G$ is geodesically complete.
\end{theorem}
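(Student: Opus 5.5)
The plan is to reduce completeness of the left-invariant metric to global existence of solutions of the reduced geodesic equation (\ref{geodesica2}), and then to exhibit those solutions explicitly as polynomials. As recalled above, the geodesic $\gamma$ with $\dot\gamma(0)=X_0$ is recovered from the solution $X(t)$ of $\dot X=-X\divideontimes X$ with $X(0)=X_0$, by integrating the time-dependent left-invariant field $\dot\gamma(t)=X(t)_{\gamma(t)}$. So the argument has two parts: first show $X(t)$ exists for all $t$, then show the reconstruction step on $G$ does not blow up.

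\textbf{Step 1.} By Proposition \ref{claseW0}, $(\mathfrak g,\divideontimes)$ is a nilpotent commutative associative algebra. Hence there is an $N$ with $X_0^{N+1}=0$ for every $X_0$; one may take $N$ to be the nilpotency index of the algebra. I then take as candidate solution the polynomial
$$X(t)=\sum_{k=0}^{N}(-t)^kX_0^{k+1}.$$
Rather than relying on the formal power series heuristic, I will verify directly that it solves the equation. Commutativity and associativity give
$$X(t)\divideontimes X(t)=\sum_{j,k}(-t)^{j+k}X_0^{j+k+2}=\sum_{m\ge 0}(m+1)(-t)^mX_0^{m+2},$$
which is truncated by nilpotency. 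On the other hand, $\dot X(t)=\sum_{k\ge1}k(-1)^k t^{k-1}X_0^{k+1}$. Reindexing with $m=k-1$ shows $\dot X=-X\divideontimes X$. The equation is a polynomial ODE, so it is locally Lipschitz and Picard–Lindelöf uniqueness applies. Therefore this globally defined polynomial is \emph{the} maximal solution, and it exists on all of $\mathbb R$.

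\textbf{Step 2.} This is the step I expect needs the most care, because the paper only states it as a consequence: completeness of (\ref{geodesica2}) must imply completeness of $(G,g)$. I would argue that $\dot\gamma=X(t)_{\gamma(t)}$ is a time-dependent, left-invariant vector field, and such fields are complete on a Lie group. The reason is that in a basis $E_i$ of left-invariant fields, $\gamma$ solves $\dot\gamma=\sum_i x_i(t)E_i(\gamma)$ with coefficients $x_i$ continuous on $\mathbb R$. If the solution $\gamma$ exists on $[0,T)$, pick $\varepsilon>0$ such that for every $t_0\in[0,T]$ the flow starting at the identity $e$ at time $t_0$ exists on $[t_0,t_0+\varepsilon]$; uniformity follows from compactness of $[0,T]$ and continuity of the coefficients. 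By left invariance, the solution starting at $\gamma(t_0)$ is $\gamma(t_0)$ times the solution starting at $e$. So the solution extends past $T$, and the same argument works backwards in time. Hence every geodesic is defined on $\mathbb R$, and the metric is geodesically complete.

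The remaining work is the routine coefficient verification in Step 1. If one wants the argument to be independent of the ODE literature in Step 2, one can instead cite the standard fact, as in \cite{Guediri}, that a left-invariant metric is complete iff the Euler-type equation (\ref{geodesica2}) on $\mathfrak g$ is complete.
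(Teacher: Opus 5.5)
Your proposal is correct and follows essentially the same route as the paper: Proposition~\ref{claseW0} gives nilpotency, the solution of $\dot X=-X\divideontimes X$ is the truncated series $X(t)=\sum_{k=0}^{N}(-t)^kX_0^{k+1}$, and completeness follows from the reduction to this equation on $\mathfrak g$. The paper obtains the polynomial from a formal power series and simply asserts the reduction, whereas you verify the polynomial directly (with Picard--Lindel\"of uniqueness) and justify the reduction by showing that time-dependent left-invariant fields are complete.
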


\begin{remark}
An interesting consequence of Theorem \ref{claseW0} is that $(G, g)$ admits geodesics that are one-parameter subgroups of $G$. Indeed, since the associative algebra $(\mathfrak{g}, \divideontimes)$ is nilpotent, its \textit{annihilator}
$$
\operatorname{Ann}(\mathfrak{g}, \divideontimes) = \{ X \in \mathfrak{g} : X \divideontimes Y = 0, \mbox{ for all } Y \in \mathfrak{g} \}
$$
is necessarily non-trivial. For any non-zero element $X \in \operatorname{Ann}(\mathfrak{g})$ (or, more generally, any nonzero nilpotent element of index two), the condition $\nabla_X X = X \divideontimes X = 0$ is satisfied. Consequently, the curve $\gamma: \mathbb{R} \to G$ defined by $\gamma(t) = \exp(tX)$ is the unique geodesic starting at the identity $e \in G$ with initial velocity $X_e$.
\end{remark}

\section{Example: Almost-abelian Lie algebras.}\label{almostabelian}

Let $\mathfrak{g}$ be an almost-abelian Lie algebra; that is, a non-abelian Lie algebra containing an abelian ideal of codimension one. It is straightforward to show that $\mathfrak{g}$ possesses a unique codimension-one ideal if and only if it is not isomorphic to the direct product $\mathfrak{h}_{3}(\mathbb{R}) \times \mathbb{R}^{k}$ (this can be deduced from the \textit{correspondence theorem} for Lie algebras together with the fact that $\mathfrak{aff}(\mathbb{R})$ has a unique codimension-one ideal). Here, $\mathfrak{h}_{3}(\mathbb{R})$ denotes the $3$-dimensional real Heisenberg algebra spanned by $\{E, F, Z\}$ with the single non-zero bracket $[E,F]=Z$.

Let $(\mathfrak{g}, \langle \cdot, \cdot \rangle)$ be a pseudo-Riemannian Lie algebra and assume that $\mathfrak{g}$ possesses a non-degenerate abelian ideal $\mathfrak{a}$ of codimension one. It follows that $\mathfrak{g}$ admits the orthogonal direct sum decomposition $\mathfrak{g} = \mathfrak{a} \oplus \mathfrak{a}^{\perp}$ (see \cite[Ch. 2, Lemma 23]{ONeill}). Since $\dim(\mathfrak{a}^{\perp}) = 1$, we may choose a unit vector $\ecal$ spanning $\mathfrak{a}^{\perp}$, such that $\langle \ecal , \ecal \rangle = \varepsilon \in \{1, -1\}$. Furthermore, assuming that the $U$-tensor of $(\mathfrak{g}, \langle \cdot, \cdot \rangle)$ is associative, we aim to characterize the resulting structure of $\mathfrak{g}$ and the metric $\langle \cdot, \cdot \rangle$.

Since $\mathfrak{a}$ is an abelian ideal of codimension one, the Lie algebra structure of $\mathfrak{g}$ is completely determined by the action of $\operatorname{ad}_{\ecal}$ on $\mathfrak{a}$, which leaves $\mathfrak{a}$ invariant. We denote by $T: \mathfrak{a} \rightarrow \mathfrak{a}$ the linear map obtained by restricting both the domain and codomain of $\operatorname{ad}_{\ecal}$ to $\mathfrak{a}$. Formally, if $\pi_{\mathfrak{a}}^{\mathfrak{a} \oplus \mathfrak{a}^{\perp}}$ is the projection map from $\mathfrak{g}$ onto $\mathfrak{a}$, then $T$ is given by $\pi_{\mathfrak{a}}^{\mathfrak{a} \oplus \mathfrak{a}^{\perp}} \circ \operatorname{ad}_{\ecal}\!|_{\mathfrak{a}} $, and so $[\ecal, X] = TX$, for all $X \in \mathfrak{a}$.

Furthermore, since the restricted metric $\langle \cdot, \cdot \rangle|_{\mathfrak{a} \times \mathfrak{a}}$ is non-degenerate, any linear map $Q: \mathfrak{a} \rightarrow \mathfrak{a}$ admits a unique adjoint $Q^{\dagger}: \mathfrak{a} \rightarrow \mathfrak{a}$ defined by the relation $\langle QX, Y \rangle = \langle X, Q^{\dagger}Y \rangle$ for all $X, Y \in \mathfrak{a}$. Let $S := \frac{1}{2}(T + T^{\dagger})$ and $A := \frac{1}{2}(T - T^{\dagger})$ denote the symmetric and skew-symmetric parts of $T$, respectively.

Applying the Koszul formula, the Levi-Civita connection $\nabla$ of $(\mathfrak{g}, \langle \cdot, \cdot \rangle)$ is completely determined by its action on the subspace $\mathfrak{a}$ and the vector $\ecal$. For any $X, Y \in \mathfrak{a}$, it is straightforward to verify that:
$$
\nabla_{\ecal} \ecal = 0,  \quad
\nabla_{\ecal} Y = AY, \quad
\nabla_{X} {\ecal} = -SX, \quad
\nabla_{X} Y = \varepsilon \langle SX, Y \rangle {\ecal}.
$$
Consequently, the symmetric part of $\nabla$ is given by:
$$
U(\ecal,\ecal) = 0, \quad
U(\ecal,X) = -\frac{1}{2}T^{\dagger} X, \quad
U(X,Y) = \varepsilon \langle SX, Y \rangle \ecal.
$$


Assume that $U$ defines an associative product $\divideontimes$ on $\mathfrak{g}$. For any $W, X, Y \in \mathfrak{a}$, the associativity of $\divideontimes$ imposes the following constraints:

\begin{enumerate}[label=\Alph*.]
\item\label{nilpotencia} The condition $( \ecal \divideontimes \ecal)\divideontimes X = \ecal \divideontimes ( \ecal \divideontimes X)$ implies $T^{\dagger} \circ T^{\dagger} X = 0$. Hence, both $T^{\dagger}$ and $T$ are $2$-step nilpotent linear maps. Since $\mathfrak{g}$ is non-abelian, $T$ cannot be identically zero.

\item\label{isotropia} The condition $(\ecal \divideontimes X)\divideontimes Y = \ecal \divideontimes (X\divideontimes Y)$ implies $\langle S T^{\dagger} X , Y \rangle  = 0$. Since the restricted metric $\langle \cdot, \cdot \rangle|_{\mathfrak{a} \times \mathfrak{a}}$ is non-degenerate, it follows that $S \circ T^{\dagger} = 0$. Combined with the previous constraint, this yields $T \circ T^{\dagger} = 0$.

\item\label{normalidad} The condition $(W\divideontimes X)\divideontimes Y = W\divideontimes (X\divideontimes Y)$ implies $\langle S W , X \rangle T^{\dagger}Y = \langle S Y , X \rangle T^{\dagger} W$.
\end{enumerate}

Condition \ref{isotropia} says that the image of $T^{\dagger}$ is a \textit{totally isotropic subspace} of $(\mathfrak{a} , \langle \cdot, \cdot \rangle)$. Furthermore, computing the $(1,2)$-trace of the tensor $\theta(W,X,Y)=\langle W \divideontimes X , Y \rangle$ yields $\text{Tr}_{1,2}^{\langle\cdot,\cdot \rangle}\theta(Y) = \varepsilon \operatorname{Tr}(S) \langle Y, \ecal \rangle$. This trace vanishes identically because $\operatorname{Tr}(T)=0$, since $T$ is nilpotent. To fully exploit Condition \ref{normalidad}, we analyze the structure based on whether $S$ vanishes.

\subsection{The case $S\neq 0$}If $S\neq 0$, there exists $Y_0 \in \mathfrak{a}$ such that $S Y_0 \neq 0$. By the non-degeneracy of the restricted metric, we can find an $X_0 \in \mathfrak{a}$ satisfying $\langle S Y_0 , X_0 \rangle \neq 0$. Evaluating Condition \ref{normalidad} at these vectors and using the symmetry of $S$, we deduce that the image of $T^{\dagger}$ is one-dimensional. Thus, there exist non-zero vectors $u, \zcal \in \mathfrak{a}$ such that:
\begin{equation}\label{Ttranspuesta}
T^{\dagger} W = \langle W, \zcal \rangle u, \quad \text{for all } W \in \mathfrak{a}.
\end{equation}
Consequently, the image of $T^{\dagger}$ is spanned by $u$, which must lie in the kernel of $T^{\dagger}$ by the nilpotency condition. Furthermore, Condition \ref{isotropia} ensures that $u$ is an isotropic vector. By duality, Equation (\ref{Ttranspuesta}) leads directly to the following formula for $T$: for any $X, W \in \mathfrak{a}$, we have $\langle T W , X \rangle = \langle W , T^{\dagger} X \rangle = \langle X,\zcal  \rangle \langle W , u \rangle = \langle \langle W , u \rangle \zcal  , X \rangle    $. By non-degeneracy, this implies:
\begin{equation}\label{Toriginal}
T W = \langle W , u \rangle \zcal, \quad \text{for all } W \in \mathfrak{a}.
\end{equation}
This shows that the image of $T$ is the one-dimensional subspace of $\mathfrak{a}$ spanned by $\zcal$. We now return to Condition \ref{normalidad}. Substituting the expression for $T^{\dagger}$ from Equation (\ref{Ttranspuesta}) into Condition \ref{normalidad}, and canceling the non-zero vector $u$, we obtain:
$$
\langle S X  , W \rangle \langle Y, \zcal \rangle = \langle S X , Y \rangle \langle W,  \zcal \rangle,
$$
which is equivalent to:
$$
\langle \langle Y, \zcal \rangle S X  - \langle S X ,  Y \rangle \zcal , W \rangle = 0.
$$
Since this holds for all $W$, we conclude that the image of $S$ is spanned by $\zcal$. However, we know that $S = \frac{1}{2}(T + T^{\dagger})$. Because both $\operatorname{Im}(T)$ and $\operatorname{Im}(S)$ are spanned by $\zcal$, Equation (\ref{Ttranspuesta}) forces $u$ and $\zcal$ to be linearly dependent. Therefore, there exists a non-zero scalar $t \in \mathbb{R}$ such that $u = t \zcal$. This linear dependence immediately implies that $T$ is self-adjoint ($T = T^{\dagger}$). Indeed:
\begin{align*}
T^{\dagger} X & =  \langle X,  \zcal  \rangle  u &&   (\text{by Eq. (\ref{Ttranspuesta})})\\
              & =  \langle X,  \zcal  \rangle  t  \zcal  && (\text{since $ u = t  \zcal  $})\\
              & =  \langle X, t \zcal  \rangle  \zcal  && (\text{by bilinearity)}\\
              & =  \langle X, u \rangle   \zcal  &&  (\text{since $ u = t \zcal   $}) \\
              & =  T X                      &&  (\text{by Eq. (\ref{Toriginal})}).
 \end{align*}
We can now establish the structural decomposition of $\mathfrak{g}$. Let $n = \dim(\mathfrak{g})$. Since $u \in \mathfrak{a}$ is non-zero, we can choose a vector $\fcal \in \mathfrak{a}$ such that $\langle \fcal, u \rangle = 1$. By replacing $\fcal$ with $\fcal + s u$ for a suitable scalar $s$, we can assume without loss of generality that $\fcal$ is a null vector ($\langle \fcal, \fcal \rangle = 0$). This choice yields the Lie bracket $[\ecal, \fcal] = T(\fcal) = \zcal$.

The subspace $\mathcal{H} = \operatorname{span}\{\ecal, \fcal, \zcal\}$ forms a Lie subalgebra of $\mathfrak{g}$ isomorphic to $\mathfrak{h}_3$. Furthermore, the restriction of the metric $\langle \cdot, \cdot \rangle$ to $\mathcal{H}$ is non-degenerate and inherits a Lorentzian (or anti-Lorentzian) signature, depending on whether $\varepsilon = \langle \ecal, \ecal \rangle$ is $1$ or $-1$. Let $\mathcal{V} = \mathcal{H}^\perp$ be its orthogonal complement. Since $\mathcal{H}$ is non-degenerate, $\mathcal{V}$ is also non-degenerate, and $\mathcal{V} \subset \mathfrak{a}$. Because $\mathfrak{a}$ is abelian, the brackets $[\fcal, \mathcal{V}]$ and $[\zcal, \mathcal{V}]$ vanish identically. Moreover, for any $v \in \mathcal{V}$, the bracket $[\ecal, v] = T(v)$ is also zero; this follows directly from the explicit formula for $T$ in (\ref{Toriginal}) and the fact that $u$ is parallel to $\zcal$. Consequently, $\mathcal{V}$ is a central, non-degenerate ideal. We summarize this result in the following structural theorem:

\begin{theorem}
Let $(\mathfrak{g}, \langle \cdot, \cdot \rangle)$ be a pseudo-Riemannian almost-abelian Lie algebra admitting a non-degenerate abelian ideal $\mathfrak{a}$ of codimension one. Let $\ecal$ be a unit generator of $\mathfrak{a}^\perp$, and suppose the symmetric part of $\operatorname{ad}_{\ecal}$ with respect to $\langle \cdot, \cdot \rangle$ is non-zero. Then, the symmetric part of the Levi-Civita connection defines an associative structure on $\mathfrak{g}$ if and only if $(\mathfrak{g}, \langle \cdot, \cdot \rangle)$ is isometrically isomorphic to the direct product
\begin{equation*}
    (\mathfrak{h}_{3}, \langle \cdot, \cdot \rangle_1) \times (\mathbb{R}^{n-3}, \langle \cdot, \cdot \rangle_2),
\end{equation*}
where:
\begin{enumerate}
    \item $\mathfrak{h}_{3} = \operatorname{span}\{E, F, Z\}$ is the 3-dimensional Heisenberg algebra equipped with the pseudo-metric $\langle \cdot, \cdot \rangle_1$ defined by $\langle E, E \rangle_1 = \pm 1$, $\langle F, Z \rangle_1 = 1$, and all other inner products being zero.
    \item $\mathbb{R}^{n-3}$ is regarded as an abelian Lie algebra and $\langle \cdot, \cdot \rangle_2$ is a non-degenerate metric $\langle \cdot, \cdot \rangle_2$ such that the total signature of the product recovers the signature of the original metric $\langle \cdot, \cdot \rangle$.
\end{enumerate}
\end{theorem}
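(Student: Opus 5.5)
The proof has two directions; most of the forward direction is already contained in the analysis preceding the statement.

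\emph{Forward direction.} Assume $U$ is associative and $S\neq 0$. The discussion of the case $S\neq 0$ already gives $u=t\zcal$ with $t\neq 0$. It also gives $TW=\langle W,u\rangle\zcal = t\langle W,\zcal\rangle\zcal$, so $T$ is self-adjoint, and it shows that $\mathcal V=\mathcal H^\perp$ is a central non-degenerate ideal. It remains to normalize the metric on $\mathcal H$.

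First, $u$ is isotropic, hence so is $\zcal$. Pick $\fcal$ null with $\langle\fcal,u\rangle=1$, that is, $\langle \fcal,\zcal\rangle=1/t$. Then $[\ecal,\fcal]=\zcal$, and the Gram matrix of $\{\ecal,\fcal,\zcal\}$ has $\langle\ecal,\ecal\rangle=\varepsilon$, $\langle\fcal,\zcal\rangle=1/t$, and all other entries zero. Here $\ecal\perp\mathfrak a$.

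To reach the normal form, rescale $F=\lambda\fcal$ and $Z=[E,F]=\lambda\zcal$ with $E=\ecal$. This gives $\langle F,Z\rangle=\lambda^2/t$.
\begin{itemize}
\item If $t>0$, take $\lambda=\sqrt t$.
\item If $t<0$, rescaling by $\lambda$ alone cannot change the sign. Instead take $E=-\ecal$, $F=\lambda\fcal$, and $Z=[E,F]=-\lambda\zcal$. Then $\langle F,Z\rangle=-\lambda^2/t>0$, and we choose $\lambda=\sqrt{-t}$.
\end{itemize}
In both cases $\langle E,E\rangle=\varepsilon=\pm1$. The map sending $E,F,Z$ and an orthonormal-type basis of $\mathcal V$ to the corresponding elements of $\mathfrak h_3\times\mathbb R^{n-3}$ is then an isometric isomorphism, and $\langle\cdot,\cdot\rangle_2:=\langle\cdot,\cdot\rangle|_{\mathcal V}$. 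Since $\mathcal H$ and $\mathcal V$ are orthogonal, signatures add.

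\emph{Converse.} Start from the model and verify associativity directly. I would take $\mathfrak a=\operatorname{span}\{F,Z\}\oplus\mathbb R^{n-3}$. This is an abelian ideal of codimension one, and it is non-degenerate because the $F,Z$ block is hyperbolic and $\langle\cdot,\cdot\rangle_2$ is non-degenerate. Its orthogonal complement is spanned by $\ecal=E$, since $E\perp\mathfrak a$. Then $T=\operatorname{ad}_E|_{\mathfrak a}$ satisfies $TW=\langle W,Z\rangle Z$, which is self-adjoint. Hence $S=T\neq0$ and $T^\dagger=T$.

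Now check conditions A, B, C, which exhaust associativity. Trilinear identities with at least one factor $\ecal$ reduce, by commutativity, to A and B together with cases such as $(\ecal\divideontimes\ecal)\divideontimes X$. I would list all placements of $\ecal$ to confirm this.
\begin{itemize}
\item A holds because $T^2W=\langle W,Z\rangle\langle Z,Z\rangle Z=0$.
\item B holds because $ST^\dagger=T^2=0$.
\item C holds because both sides equal $\langle W,Z\rangle\langle X,Z\rangle\langle Y,Z\rangle Z$, which is symmetric in $W,Y$.
\end{itemize}
The remaining mixed triples to check are those like $(X\divideontimes\ecal)\divideontimes Y$ versus $X\divideontimes(\ecal\divideontimes Y)$. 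These give $\langle ST^\dagger X,Y\rangle\ecal=\langle SX,T^\dagger Y\rangle\ecal$, and both sides vanish since $T^2=0$ and $T$ is self-adjoint.

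The main obstacle is the bookkeeping in the converse: making sure the listed conditions A–C really cover every associativity triple. The sign adjustment of $E$ when $t<0$ in the forward direction also needs care.
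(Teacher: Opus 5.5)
Your proposal is correct and follows the paper's own argument: the forward direction is exactly the preceding analysis of the case $S\neq 0$ (rank-one $T^\dagger$, $u=t\zcal$, $T$ self-adjoint, and $\mathcal V=\mathcal H^\perp$ central and non-degenerate). You also supply two points the paper leaves implicit. The first is the rescaling needed to pass from $\langle\fcal,\zcal\rangle=1/t$ to $\langle F,Z\rangle_1=1$, including the sign flip $E=-\ecal$ when $t<0$. The second is the converse: your check of A, B, C together with the mixed triple $(X,\ecal,Y)$ does exhaust associativity for a commutative product, and since $U$ is transported by isometric isomorphisms, your choice of $\mathfrak a$ inside the model is immaterial.
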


\subsection{The case $S=0$.}

In this scenario, the endomorphism $T$ is skew-symmetric with respect to the restricted metric; that is, $T = -T^{\dagger}$. Under this assumption, the associativity conditions derived from the $U$-tensor simplify significantly. As previously established in Condition \ref{nilpotencia}, the associativity requires $T^\dagger \circ T^\dagger = 0$, which implies $T^2 = 0$. Furthermore, Condition \ref{isotropia} states that $\operatorname{Im}(T)$ is a totally isotropic subspace of $(\mathfrak{a}, \langle \cdot, \cdot \rangle)$.

This case exhibits a remarkable geometric rigidity that is best understood through a \textit{Witt decomposition} of the ideal $\mathfrak{a}$ (see, for instance, \cite[Proposition 4.5.3]{Garling}). Let $W := \operatorname{Im}(T)$ and let $\{e_1, \dots, e_{2k}\}$ be a basis for $W$ (noting that its dimension must be even, as shown below). There exists a totally isotropic supplementary subspace $\widetilde{W} \subset \mathfrak{a}$ with a basis $\{\widetilde{e}_1, \dots, \widetilde{e}_{2k}\}$ such that $\langle e_i , \widetilde{e}_j \rangle = \delta_{i,j}$. Consequently, $W \oplus \widetilde{W}$ is a non-degenerate hyperbolic vector space, and we obtain a non-degenerate orthogonal complement with respect to $\mathfrak{a}$, given by $\mathcal{U} = (W \oplus \widetilde{W})^\perp$.

Since $T$ is $2$-step nilpotent, $T(W) = 0$, and since $\operatorname{Ker}(T) = (\operatorname{Im}(T^{\dagger}))^{\perp}$, it follows that $T(\mathcal{U})=0$. Therefore, the action of $T$ is entirely determined by its restriction to $\widetilde{W}$. Specifically, $T$ induces a linear map $\widetilde{\beta}: \widetilde{W} \rightarrow W$ by strictly restricting both its domain to $\widetilde{W}$ and its codomain to $W$. Because $\widetilde{W}$ and $W$ share the same dimension and $W = \operatorname{Im}(T)$, $\widetilde{\beta}$ results in a linear isomorphism. The fact that $T$ is a skew-symmetric map on $(\mathfrak{a}, \langle \cdot, \cdot \rangle|_{\mathfrak{a} \times \mathfrak{a}})$ translates to the fact that $\widetilde{\beta}$ defines a non-degenerate $2$-form $\widetilde{\omega}$ on $\widetilde{W}$, given by
\begin{equation}\label{betatransformation}
\widetilde{\omega}(\widetilde{X},\widetilde{Y}) = \langle \widetilde{\beta} \widetilde{X} , \widetilde{Y} \rangle,
\end{equation}
which implies that the dimension of $W$ must necessarily be even.

Let $\mathbb{T}W$ denote the vector space $W \oplus W^{\ast}$, where $W^{\ast}$ is the dual space of $W$, equipped with its \textit{canonical neutral metric} $\langle \! \langle \cdot, \cdot \rangle \!\rangle$ defined by
$$
\langle \! \langle  X + \xi ,  Y + \eta \rangle \!\rangle = \eta(X) + \xi(Y), \quad \mbox{ for all } X, Y \in W \mbox{ and for all } \xi, \eta \in W^{\ast}.
$$
The subspace $W \oplus \widetilde{W}$ with its restricted metric is isometric to $(\mathbb{T}W , \langle \! \langle \cdot, \cdot \rangle \!\rangle)$ via the linear isometry $I: W \oplus \widetilde{W} \rightarrow \mathbb{T}W$ defined by
$$
I(X) = \begin{cases} X, & \text{if } X \in W, \\ \langle X , \cdot \rangle|_{W}, & \text{if } X \in \widetilde{W}. \end{cases}
$$
Through this identification, we obtain a linear map $\beta: W^{\ast} \rightarrow W$ defined by $\beta(\alpha) := \widetilde{\beta}(I^{-1}(\alpha))$ for all $\alpha \in W^{\ast}$. By (\ref{betatransformation}), this map satisfies the condition
$$
\langle \! \langle \beta \eta, \xi \rangle \!\rangle = - \langle \! \langle \eta, \beta \xi \rangle \!\rangle \quad \text{for all } \xi, \eta \in W^{\ast}.
$$
Thus, utilizing the standard splitting $\mathbb{T} W = W \oplus W^{\ast}$, the map defines an \textit{infinitesimal $\beta$-transformation} of $\mathbb{T}W$ (see, for instance, \cite[Chapter 2]{Gualtieri}):
$$
D_{\beta} = \begin{pmatrix} 0 & \beta \\ 0 & 0 \end{pmatrix} \in \mathfrak{so}(\mathbb{T}W, \langle \! \langle \cdot, \cdot \rangle \!\rangle).
$$
Finally, the \textit{Witt index} of the metric provides the bound $\operatorname{dim}(W) \leq \min\{p, q\}$ if the signature of $\langle \cdot, \cdot \rangle|_{\mathfrak{a} \times \mathfrak{a}}$ is $(p,q)$. We summarize these elementary observations in the following structural theorem:

\begin{theorem}
Let $(\mathfrak{g}, \langle \cdot, \cdot \rangle)$ be a pseudo-Riemannian almost-abelian Lie algebra admitting a non-degenerate abelian ideal $\mathfrak{a}$ of codimension one. Let $(p,q)$ be the signature of $\langle \cdot, \cdot \rangle|_{\mathfrak{a} \times \mathfrak{a}}$ and let $\ecal$ be a unit generator of $\mathfrak{a}^\perp$. Suppose that $\operatorname{ad}_{\ecal}$ is a skew-symmetric linear map with respect to $\langle \cdot, \cdot \rangle$. Then, the symmetric part of the Levi-Civita connection defines an associative structure on $\mathfrak{g}$ if and only if $(\mathfrak{g}, \langle \cdot, \cdot \rangle)$ is isometrically isomorphic to the direct product
\begin{equation*}
    (\mathbb{R} \ecal \ltimes_{D_{\beta}} \mathbb{T} W , \langle \cdot, \cdot \rangle_1) \times (\mathbb{R}^{n-4k-1}, \langle \cdot, \cdot \rangle_2),
\end{equation*}
where:
\begin{enumerate}
    \item $W$ is a real vector space of dimension $2k$, with $2k \leq \min\{p,q\}$.
    \item $D_{\beta}$ is an infinitesimal $\beta$-transformation of $\mathbb{T}W = W \oplus W^{\ast}$ with $\beta$ being a non-degenerate bivector.
    \item $\langle \cdot, \cdot \rangle_1$ is a metric such that $\langle \ecal , \ecal \rangle_1 = \pm 1$, $\ecal$ is orthogonal to $\mathbb{T} W$, and the restriction $\langle \cdot, \cdot \rangle_1|_{\mathbb{T} W \times \mathbb{T} W}$ coincides with the canonical neutral metric of $\mathbb{T} W$.
    \item $\mathbb{R}^{n-4k-1}$ is regarded as an abelian Lie algebra endowed with a non-degenerate metric $\langle \cdot, \cdot \rangle_2$ such that the total signature of the product recovers the signature of the original metric $\langle \cdot, \cdot \rangle$.
\end{enumerate}
\end{theorem}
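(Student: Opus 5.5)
The plan is to reduce associativity, under the hypothesis $S=0$, to the single linear condition $T^2=0$. I would then build the isometric isomorphism directly from the Witt decomposition described just before the statement. The converse follows by reading the same computation backwards.

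\emph{Step 1: associativity $\iff T^2=0$.} With $S=0$ we have $T^{\dagger}=-T$, and the formulas for $U$ become $U(\ecal,\ecal)=0$, $U(\ecal,X)=\tfrac12 TX$ and $U(X,Y)=0$ for $X,Y\in\mathfrak{a}$. I would check every associativity triple, sorted by how many entries equal $\ecal$.
\begin{itemize}
\item Since the product is commutative and $\mathfrak{a}\divideontimes\mathfrak{a}=0$, every triple with at most one $\ecal$ has both sides equal to $0$ or to $\tfrac12 T(0)$.
\item The triples $(\ecal\divideontimes\ecal)\divideontimes X$ versus $\ecal\divideontimes(\ecal\divideontimes X)$, and $(X\divideontimes\ecal)\divideontimes\ecal$ versus $X\divideontimes(\ecal\divideontimes\ecal)$, give $0=\tfrac14T^2X$.
\item The triple with all three entries equal to $\ecal$ is trivial.
\end{itemize}
This recovers Conditions~\ref{nilpotencia}--\ref{normalidad} and shows they are also sufficient. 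Hence $U$ is associative if and only if $T^2=0$. Moreover, $\langle TX,TY\rangle=-\langle X,T^2Y\rangle$, so $W=\operatorname{Im}(T)$ is then totally isotropic.

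\emph{Step 2: normal form (forward direction).} Assume $T^2=0$. Take the Witt data already set up in the text: a totally isotropic $\widetilde W$ paired with $W$ via $\langle e_i,\widetilde e_j\rangle=\delta_{ij}$, and $\mathcal U=(W\oplus\widetilde W)^{\perp}\cap\mathfrak a$.
\begin{itemize}
\item The text gives $T(W)=0$ and $T(\mathcal U)=0$, the latter since $\operatorname{Ker}T=(\operatorname{Im}T^{\dagger})^{\perp}=W^{\perp}\supseteq\mathcal U$.
\item Consequently $\widetilde\beta=T|_{\widetilde W}:\widetilde W\to W$ is an isomorphism, and $\widetilde\omega$ is a nondegenerate $2$-form. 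Hence $\dim W=2k$ is even.
\item $W$ is totally isotropic in $(\mathfrak a,\langle\cdot,\cdot\rangle)$, so the Witt index gives $2k\le\min\{p,q\}$.
\end{itemize}
Now define $\Phi:\mathfrak g\to(\mathbb R\ecal\ltimes_{D_\beta}\mathbb TW)\times\mathbb R^{n-4k-1}$ by
\begin{itemize}
\item $\ecal\mapsto\ecal$,
\item $W\oplus\widetilde W\ni Y\mapsto I(Y)$, the isometry $I$ of the text,
\item $\mathcal U$ mapped by a linear isometry onto $(\mathbb R^{n-4k-1},\langle\cdot,\cdot\rangle_2)$, with $\langle\cdot,\cdot\rangle_2:=\langle\cdot,\cdot\rangle|_{\mathcal U}$.
\end{itemize}
$\Phi$ is an isometry because the three pieces are mutually orthogonal and $I$ is an isometry. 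For brackets, $\mathfrak a$ is abelian and the model's ideal $\mathbb TW\times\mathbb R^{n-4k-1}$ is abelian, so only $\operatorname{ad}_{\ecal}$ matters. The identity $I\circ T|_{W\oplus\widetilde W}=D_\beta\circ I$ holds by definition of $\beta=\widetilde\beta\circ I^{-1}|_{W^*}$. Also $T$ kills $W$ and $\mathcal U$, matching $D_\beta(W)=0$ and the centrality of the $\mathbb R^{n-4k-1}$ factor. Finally, $\beta$ is skew with respect to $\langle\!\langle\cdot,\cdot\rangle\!\rangle$ and invertible, so it is a nondegenerate bivector.

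\emph{Step 3: converse.} In the model, $\mathfrak a=\mathbb TW\oplus\mathbb R^{n-4k-1}$ is a nondegenerate abelian ideal, $\ecal\perp\mathfrak a$, and $T=D_\beta\oplus 0$. Then $D_\beta\in\mathfrak{so}(\mathbb TW)$ gives $S=0$, and the shape $\begin{pmatrix}0&\beta\\0&0\end{pmatrix}$ gives $T^2=0$. By Step~1, $U$ is associative.

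I expect the main obstacle to be Step~1's claim that the listed conditions exhaust associativity, since the converse depends on sufficiency and not only necessity. The case check of products is short but must cover every placement of $\ecal$, as done above. A secondary point is the sign bookkeeping showing that $\beta$ (rather than $-\beta$) intertwines $T$ and $D_\beta$. If a sign appears, I would absorb it by replacing $\ecal$ with $-\ecal$, which preserves all stated hypotheses.
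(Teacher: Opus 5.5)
Your proposal is correct and follows the paper's own route: with $S=0$ the associativity conditions reduce to $T^2=0$. The Witt decomposition $\mathfrak{a}=W\oplus\widetilde W\oplus\mathcal{U}$, with $\widetilde\beta=T|_{\widetilde W}$ transported by $I$, then produces $D_\beta$ and the bound $2k\le\min\{p,q\}$. You are more complete than the paper on the ``if'' direction, since you check that $T^2=0$ is also sufficient for associativity while the paper only derives necessary conditions. The triple $(\ecal,X,\ecal)$ you skip is automatic by commutativity, and no sign correction is needed because $I\circ T=D_\beta\circ I$ holds on the nose by the definition of $\beta$.
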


\section{Example: Two-step nilpotent Lie algebras}

Let us consider the case of a metric Lie algebra $(\mathfrak{n}, \langle \cdot, \cdot \rangle)$ where $\mathfrak{n}$ is 2-step nilpotent and the restriction of the metric to its center, $\mathfrak{z}$, is non-degenerate. This non-degeneracy condition allows us to decompose $\mathfrak{n}$ as an orthogonal direct sum $\mathfrak{n} = \mathcal{V} \oplus \mathfrak{z}$, where $\mathcal{V}$ is the non-degenerate orthogonal complement of $\mathfrak{z}$. In the spirit of the operators introduced by Aroldo Kaplan in his seminal work on $H$-type algebras \cite{Kaplan}, we can capture the relationship between the Lie bracket and the metric through a family of endomorphisms $J_Z: \mathcal{V} \to \mathcal{V}$, for $Z \in \mathfrak{z}$, implicitly defined by:\begin{eqnarray}\label{Kaplan}\langle J_Z E , F \rangle = \langle Z, [E,F] \rangle, \quad \mbox{for all } E,F \in \mathcal{V}.\end{eqnarray}Due to the skew-symmetry of the Lie bracket, these endomorphisms are skew-symmetric with respect to the restricted metric on $\mathcal{V}$ and completely determine the bracket structure. Using the Koszul formula to compute the Levi-Civita connection of $(\mathfrak{n}, \langle \cdot, \cdot \rangle)$, it is straightforward to verify that for all $E, F \in \mathcal{V}$ and $Z, \widetilde{Z} \in \mathfrak{z}$:
$$
\nabla_E F = \frac{1}{2}[E,F], \quad
\nabla_E Z = \nabla_Z E = -\tfrac{1}{2} J_{Z}E, \quad
\nabla_Z \widetilde{Z} = 0.
$$
Consequently, the symmetric part $U$ of $\nabla$ is given by:
$$
U(E,F) = 0, \quad
U(E,Z) = U(Z,E) = -\tfrac{1}{2} J_{Z}E, \quad
U(Z,\widetilde{Z}) = 0.
$$
Under these conditions, the $U$-tensor cannot be associative. Denoting the product $U(X,Y)$ by $X \divideontimes Y$, if we assume associativity, the condition $(Z \divideontimes \widetilde{Z}) \divideontimes E = Z \divideontimes ( \widetilde{Z} \divideontimes E)$ applied to elements $Z, \widetilde{Z} \in \mathfrak{z}$ and $E \in \mathcal{V}$ forces the relation:\begin{eqnarray}\label{nilpotencia2}J_{Z} \circ J_{\widetilde{Z}} = 0.\end{eqnarray}Let $\mathcal{W}$ be the subspace of $\mathcal{V}$ spanned by the images of the linear transformations $J_{Z}$ for all $Z \in \mathfrak{z}$. Equation (\ref{nilpotencia2}) implies that $\mathcal{W}$ is contained in the simultaneous kernel of all operators $J_{\widetilde{Z}}$. However, by the defining relation (\ref{Kaplan}), any element in this simultaneous kernel must commute with all of $\mathcal{V}$, which means it belongs to the center $\mathfrak{z}$. Thus, $\mathcal{W} \subset \mathfrak{z}$. Since $\mathcal{W}$ is a subspace of $\mathcal{V}$, and $\mathcal{V} \cap \mathfrak{z} = \{0\}$, we are forced to conclude that $\mathcal{W} = \{0\}$. This implies that all endomorphisms $J_Z$ are trivial, making the Lie algebra $\mathfrak{n}$ abelian.

\begin{proposition}Let $(\mathfrak{n}, \langle \cdot, \cdot \rangle)$ be a metric Lie algebra where $\mathfrak{n}$ is 2-step nilpotent and non-abelian. If the restriction of the metric to the center of $\mathfrak{n}$ is non-degenerate, then the $U$-tensor cannot be associative.
\end{proposition}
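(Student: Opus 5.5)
We argue by contradiction, using the explicit form of the $U$-tensor computed just before the statement. Write $\mathfrak{n}=\mathcal{V}\oplus\mathfrak{z}$ as an orthogonal sum, which the non-degeneracy of $\langle\cdot,\cdot\rangle|_{\mathfrak{z}\times\mathfrak{z}}$ allows, and use the Kaplan operators $J_Z$ from (\ref{Kaplan}). I would first re-check the formulas $U(E,F)=0$, $U(E,Z)=-\tfrac12 J_ZE$ and $U(Z,\widetilde Z)=0$ directly from (\ref{definicionU}). The ingredients are that $[\mathfrak{n},\mathfrak{n}]\subset\mathfrak{z}$ (2-step nilpotency), that $\mathfrak{z}$ brackets trivially with everything, and that $\mathcal{V}\perp\mathfrak{z}$. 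For instance, $\langle U(E,F),X\rangle=\tfrac12\langle[X,E],F\rangle+\tfrac12\langle[X,F],E\rangle$ vanishes because every bracket lies in $\mathfrak{z}\perp\mathcal{V}$ or is zero.

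Assume now that $\divideontimes$ is associative, and take $Z,\widetilde Z\in\mathfrak{z}$ and $E\in\mathcal{V}$. The left side $(Z\divideontimes\widetilde Z)\divideontimes E$ vanishes since $Z\divideontimes\widetilde Z=0$. On the right, $\widetilde Z\divideontimes E=-\tfrac12 J_{\widetilde Z}E$ lies in $\mathcal{V}$, so $Z\divideontimes(\widetilde Z\divideontimes E)=\tfrac14 J_ZJ_{\widetilde Z}E$. Associativity therefore gives $J_Z\circ J_{\widetilde Z}=0$ for all $Z,\widetilde Z\in\mathfrak{z}$, which is (\ref{nilpotencia2}).

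Next, let $v\in\mathcal{V}$ lie in the common kernel of all $J_Z$; in particular this applies to $v=J_{\widetilde Z}E$. Then $\langle Z,[v,F]\rangle=\langle J_Zv,F\rangle=0$ for every $Z\in\mathfrak{z}$ and $F\in\mathcal{V}$. Since $[v,F]\in\mathfrak{z}$ and the metric on $\mathfrak{z}$ is non-degenerate, $[v,F]=0$. Also $[v,\mathfrak{z}]=0$, so $v$ is central, and hence $v\in\mathcal{V}\cap\mathfrak{z}=\{0\}$. This shows $J_{\widetilde Z}\equiv0$ for every $\widetilde Z$.

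Finally, $J\equiv0$ means $\langle Z,[E,F]\rangle=0$ for all $Z\in\mathfrak{z}$ and $E,F\in\mathcal{V}$. As $[E,F]\in\mathfrak{z}$, non-degeneracy on $\mathfrak{z}$ again gives $[E,F]=0$. Together with the centrality of $\mathfrak{z}$, this makes $\mathfrak{n}$ abelian, contradicting the hypothesis.

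The only delicate point is where non-degeneracy of $\mathfrak{z}$ enters. It is used twice: to test elements of $\mathfrak{z}$ against $\mathfrak{z}$ itself, and to get the decomposition $\mathfrak{n}=\mathcal{V}\oplus\mathfrak{z}$ with $\mathcal{V}\cap\mathfrak{z}=0$. It is also what makes the $J_Z$ recover the full bracket. I would state these uses explicitly, since the argument breaks down without them.
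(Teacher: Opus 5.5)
Your proof is correct and follows the paper's own argument essentially step for step. You use the same orthogonal splitting $\mathfrak{n}=\mathcal{V}\oplus\mathfrak{z}$ and the same associativity test $(Z\divideontimes\widetilde Z)\divideontimes E=Z\divideontimes(\widetilde Z\divideontimes E)$, which yields $J_Z\circ J_{\widetilde Z}=0$, and you observe as the paper does that the images of the $J_{\widetilde Z}$ lie in the common kernel and hence in $\mathcal{V}\cap\mathfrak{z}=\{0\}$. You also make explicit the two uses of non-degeneracy on $\mathfrak{z}$ that the paper leaves implicit, namely passing from $\langle Z,[v,F]\rangle=0$ to $[v,F]=0$, and from $J\equiv 0$ to $[\mathcal{V},\mathcal{V}]=0$.
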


Let us now explore the opposite scenario. Motivated by the preceding result and the findings in Section \ref{almostabelian}, we consider the case where the center $\mathfrak{z}$ of $\mathfrak{n}$ is a maximally isotropic subspace of $(\mathfrak{n}, \langle \cdot, \cdot \rangle)$, and $\langle \cdot, \cdot \rangle$ is a \textit{split metric}. Using the Witt decomposition, there exists another totally isotropic subspace $\widetilde{\mathfrak{z}}$ such that $\mathfrak{n} = \mathfrak{z} \oplus \widetilde{\mathfrak{z}}$.

Under these conditions, it is straightforward to verify that $\mathfrak{z} \subseteq \operatorname{Ann}(\mathfrak{n}, \divideontimes)$. Indeed, for arbitrary $X,Y \in \mathfrak{n}$ and $Z \in \mathfrak{z}$, we have:
$$
2 \langle Z \divideontimes X , Y \rangle = \langle [Y, Z] , X \rangle + \langle [Y,X], Z \rangle.
$$
The first term vanishes because $Z \in \mathfrak{z}$, and the second term vanishes because $[Y,X] \in [\mathfrak{n},\mathfrak{n}] \subseteq \mathfrak{z}$ (since $\mathfrak{n}$ is 2-step nilpotent) and $\mathfrak{z}$ is a totally isotropic subspace.

Furthermore, the product $\widetilde{\mathfrak{z}} \divideontimes \widetilde{\mathfrak{z}}$ must be contained in $\mathfrak{z}$. Indeed, since $\mathfrak{z}$ is maximally isotropic, it coincides with its orthogonal complement ($\mathfrak{z} = \mathfrak{z}^\perp$). Thus, to prove that $\widetilde{\mathfrak{z}} \divideontimes \widetilde{\mathfrak{z}} \subseteq \mathfrak{z}$, it suffices to show that $\langle \widetilde{\mathfrak{z}} \divideontimes \widetilde{\mathfrak{z}} , \mathfrak{z} \rangle = 0$, which follows immediately from definition (\ref{definicionU}).

From these two elementary observations, it follows that the algebra $(\mathfrak{n}, \divideontimes)$ is associative, since the identity $X \divideontimes (Y \divideontimes Z) = 0 = (X \divideontimes Y) \divideontimes Z$ holds trivially for all $X,Y,Z \in \mathfrak{n}$. We summarize this result in the following proposition:

\begin{proposition}
Let $(\mathfrak{n}, \langle \cdot, \cdot \rangle)$ be a 2-step nilpotent metric Lie algebra equipped with a split metric. If the center of $\mathfrak{n}$ is a maximally isotropic subspace, then the associated $U$-tensor is associative. In particular, the commutative algebra $(\mathfrak{n}, \divideontimes)$ is nilpotent of step at most 2, satisfying $X \divideontimes (Y \divideontimes Z) = 0$ for all $X, Y, Z \in \mathfrak{n}$.
\end{proposition}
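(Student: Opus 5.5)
The plan is to show that every triple product vanishes. Concretely, I will establish two facts: $\mathfrak{z}$ lies in the annihilator $\operatorname{Ann}(\mathfrak{n},\divideontimes)$, and $\mathfrak{n}\divideontimes\mathfrak{n}\subseteq\mathfrak{z}$. Then for all $X,Y,Z\in\mathfrak{n}$ both $X\divideontimes(Y\divideontimes Z)$ and $(X\divideontimes Y)\divideontimes Z$ are products with a factor in $\mathfrak{z}$, so both vanish. Associativity and the step-two nilpotency of $(\mathfrak{n},\divideontimes)$ follow at once; commutativity of $\divideontimes$ is automatic from the symmetry of $U$. Throughout I only use definition (\ref{definicionU}) together with two structural inputs: $[\mathfrak{n},\mathfrak{n}]\subseteq\mathfrak{z}$ (2-step nilpotency) and $\mathfrak{z}=\mathfrak{z}^\perp$ (maximal isotropy for a split metric, so $\dim\mathfrak{z}=\tfrac12\dim\mathfrak{n}$).

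\textbf{Step 1: $\mathfrak{z}$ annihilates everything.} For $Z\in\mathfrak{z}$ and $X,Y\in\mathfrak{n}$, formula (\ref{definicionU}) gives
$$2\langle Z\divideontimes X,Y\rangle=\langle [Y,Z],X\rangle+\langle [Y,X],Z\rangle .$$
The first term is zero because $Z$ is central. The second is zero because $[Y,X]\in\mathfrak{z}$ and $\mathfrak{z}$ is totally isotropic. Nondegeneracy of $\langle\cdot,\cdot\rangle$ on $\mathfrak{n}$ then gives $Z\divideontimes X=0$.

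\textbf{Step 2: every product lies in $\mathfrak{z}$.} Since $\mathfrak{z}=\mathfrak{z}^\perp$, it suffices to check that $\langle X\divideontimes Y,Z\rangle=0$ for all $X,Y\in\mathfrak{n}$ and $Z\in\mathfrak{z}$. By (\ref{definicionU}),
$$2\langle X\divideontimes Y,Z\rangle=\langle[Z,X],Y\rangle+\langle[Z,Y],X\rangle,$$
and both brackets vanish because $Z$ is central. Note that this gives the inclusion for all of $\mathfrak{n}\divideontimes\mathfrak{n}$, not just $\widetilde{\mathfrak{z}}\divideontimes\widetilde{\mathfrak{z}}$. So the Witt complement $\widetilde{\mathfrak{z}}$ is not actually needed beyond guaranteeing that $\mathfrak{z}$ is Lagrangian.

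\textbf{Step 3: conclude.} Combining the two steps, $Y\divideontimes Z\in\mathfrak{z}$, so $X\divideontimes(Y\divideontimes Z)=0$ by Step 1. By commutativity and the same argument, $(X\divideontimes Y)\divideontimes Z=0$. Hence $\divideontimes$ is associative and $\mathfrak{n}\divideontimes\mathfrak{n}\divideontimes\mathfrak{n}=0$.

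The only point needing care is justifying $\mathfrak{z}^\perp=\mathfrak{z}$. The hypothesis that $\mathfrak{z}$ is totally isotropic gives $\mathfrak{z}\subseteq\mathfrak{z}^\perp$. For a split metric of signature $(m,m)$ on a $2m$-dimensional space, maximal isotropy means $\dim\mathfrak{z}=m$. Then $\dim\mathfrak{z}^\perp=2m-m=m$, which forces equality. I do not expect any real obstacle beyond this dimension count.
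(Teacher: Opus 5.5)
Your proof is correct and is essentially the paper's argument: you show $\mathfrak{z}\subseteq\operatorname{Ann}(\mathfrak{n},\divideontimes)$ and that all products land in $\mathfrak{z}^\perp=\mathfrak{z}$ using definition (\ref{definicionU}), so every triple product vanishes. The differences are cosmetic. You prove the second inclusion for all of $\mathfrak{n}$ rather than only for $\widetilde{\mathfrak{z}}$, which is equivalent since $\mathfrak{z}$ annihilates. You also supply the dimension count justifying $\mathfrak{z}=\mathfrak{z}^\perp$, which the paper only asserts.
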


\section{Final remarks}

\begin{enumerate}
\item If $\mathfrak{g}$ is a simple Lie algebra, it is well-known that its Killing form defines a bi-invariant metric on $G$; and so $(\mathfrak{g},\divideontimes)$ is an abelian algebra. However, it is worth mentioning that $\mathfrak{g}$ can admit other pseudo-Riemannian metrics such that $(\mathfrak{g},\divideontimes)$ is a non-abelian commutative associative algebra. An explicit example is $\mathfrak{sl}(2,\mathbb{R})$ with the Lie bracket given by
$$
[{ e_1},{ e_2}]={ e_3}, \quad
[{ e_1},{ e_3}]={ e_1}, \quad
[{ e_2},{ e_3}]=-2{ e_1}-{ e_2}
$$
and the Lorentzian metric defined by $\langle e_1, e_1 \rangle = -1$, $\langle e_2, e_2 \rangle = 1$, and $\langle e_3, e_3 \rangle = 1$, with all other inner products being zero. The corresponding algebra $(\mathfrak{sl}(2,\mathbb{R}),\divideontimes)$ is given by:
\begin{eqnarray*}
e_1  \divideontimes e_1 = e_3, \quad
e_1  \divideontimes e_2 = -e_3, \quad
e_1  \divideontimes e_3 = \tfrac{1}{2}(e_1+e_2), \\
e_2  \divideontimes e_2 = e_3,  \quad
e_2  \divideontimes e_3 = -\tfrac{1}{2}(e_1+e_2), \quad
e_3  \divideontimes e_3 = 0.
\end{eqnarray*}
\item Some of the results presented here can be improved by replacing the associativity hypothesis on the $U$-tensor with a \textit{power-associativity hypothesis}, such as the one defining \textit{Jordan algebras}.
\end{enumerate}

\end{document}